\newcommand{\bydef}{:=}
\newcommand{\id}{\mathrm{id}}
\newcommand{\cA}{\mathcal{A}}
\newcommand{\cH}{\mathcal{H}}
\newcommand{\cJ}{\mathcal{J}}
\newcommand{\ZZ}{\mathbb{Z}}
\newcommand{\FF}{\mathbb{F}}
\newcommand{\KK}{\mathbb{K}}
\DeclareMathOperator{\Grp}{\mathrm{Grp}}
\DeclareMathOperator{\AlgF}{\mathrm{Alg_{\FF}}}
\DeclareMathOperator{\End}{\mathrm{End}}
\DeclareMathOperator{\Aut}{\mathrm{Aut}}
\DeclareMathOperator{\AAut}{\mathbf{Aut}}
\newcommand{\frsl}{{\mathfrak{sl}}}
\newcommand{\SL}{\mathrm{SL}}
\newcommand{\GGL}{\mathbf{GL}}
\newcommand{\SSL}{\mathbf{SL}}
\newcommand{\SSP}{\mathbf{Sp}}
\newcommand{\subo}{_{\bar 0}}
\newcommand{\subuno}{_{\bar 1}}
\newcommand{\Cm}{\mathsf{C}}
\renewcommand{\Aut}{\mathsf{Aut}}
\renewcommand{\End}{\mathsf{End}}
\renewcommand{\id}{\mathsf{id}}
\newcommand{\Km}{\mathsf{K}}
\newcommand{\Gm}{\mathsf{G}}
\newcommand{\Jm}{\mathsf{J}}
\newtheorem{theorem}{Theorem}
\newtheorem{proposition}[theorem]{Proposition}
\newtheorem{corollary}[theorem]{Corollary}
\theoremstyle{definition}
\newtheorem{df}[theorem]{Definition}
\newtheorem{remark}[theorem]{Remark}
\numberwithin{theorem}{section}
\numberwithin{equation}{section}
\begin{document}

\title{On Kac's Jordan superalgebra}

\author[A.S.~C\'ordova-Mart{\'\i}nez]{Alejandra S.~C\'ordova-Mart{\'\i}nez${}^\star$}
\address{Departamento de Matem\'{a}ticas
 e Instituto Universitario de Matem\'aticas y Aplicaciones,
 Universidad de Zaragoza, 50009 Zaragoza, Spain}
\email{sarina.cordova@gmail.com}
\thanks{${}^\star$ Supported by the Spanish Ministerio de Econom\'{\i}a y 
Competitividad---Fondo Europeo de Desarrollo Regional (FEDER) MTM2013-45588-C3-2-P, 
and by the Diputaci\'on General de Arag\'on---Fondo Social Europeo 
(Grupo de Investigaci\'on de \'Algebra). A.S.~C\'ordova-Mart{\'\i}nez also acknowledges support from Consejo Nacional de Ciencia y Tecnolog{\'\i}a
(CONACyT, M\'exico) through grant 420842/262964.}

\author[A.~Darehgazani]{Abbas Darehgazani${}^\diamond$}
\address{Department of Mathematics\\ University of Isfahan\\Isfahan, Iran,
P.O.Box: 81746-73441 and School of Mathematics, Institute for
Research in Fundamental Sciences (IPM), P.O. Box: 19395-5746}
\email{abbas.darehgazani@gmail.com}
\thanks{${}^\diamond$ This research was in part supported by a grant from IPM (No. 94160221) and partially carried out in
IPM-Isfahan Branch.}

\author[A.~Elduque]{Alberto Elduque${}^\star$}
\address{Departamento de Matem\'{a}ticas
 e Instituto Universitario de Matem\'aticas y Aplicaciones,
 Universidad de Zaragoza, 50009 Zaragoza, Spain}
\email{elduque@unizar.es}

\subjclass[2010]{Primary 17C40; Secondary 17C70, 17C30}

\keywords{Kac's Jordan superalgebra; group-scheme of automorphisms, twisted forms, gradings}

\date{January 23, 2017}

\begin{abstract}
The group-scheme of automorphisms of the ten-dimensional exceptional Kac's Jordan superalgebra is shown to be isomorphic to the semidirect product of the direct product of two copies of $\SSL_2$ by the constant group scheme $\Cm_2$.

This is used to revisit, extend, and simplify, known results on the classification of the twisted forms of this superalgebra and of its gradings.
\end{abstract}

\maketitle

Kac's ten-dimensional superalgebra $\Km_{10}$ is an exceptional Jordan superalgebra which appeared for the first time in Kac's classification \cite{Kac} of the finite dimensional simple Jordan superalgebras over an algebraically closed field of characteristic $0$. It was constructed by Lie theoretical terms from a $3$-grading of the exceptional simple Lie superalgebra $F(4)$.

A more conceptual definition was given in \cite{BE02} over an arbitrary field $\FF$ of characteristic not two (\emph{these assumptions on the ground field will be kept throughout the paper}), using the three-dimensional Kaplansky superalgebra $\Km_3$. The even part $(\Km_3)\subo$ is a copy of the ground field $\FF$: $(\Km_3)\subo=\FF a$, with $a^2=a$; while the odd part is a two-dimensional vector space $W$ endowed with a nonzero skew-symmetric bilinear form $(\,\mid\,)$. The multiplication is determined as follows:
\[
a^2=a,\quad av=\frac{1}{2}v=va,\quad vw=(v\vert w)a,
\]
for any $v,w\in W$. We extend $(\,\mid\,)$ to a supersymmetric bilinear form on $\Km_3$: $(\Km_3)\subo$ and $(\Km_3)\subuno$ are orthogonal, with $(a\vert a)=\frac{1}{2}$.

Then $\Km_{10}=\FF 1\oplus\bigl(\Km_3\otimes_\FF\Km_3\bigr)$, with $(\Km_{10})\subo=\FF1\oplus\bigl(\Km_3\otimes_\FF\Km_3\bigr)\subo=\FF 1\oplus\FF(a\otimes a)\oplus (W\otimes W)$, and $(\Km_{10})\subuno=\bigl(\Km_3\otimes_\FF\Km_3\bigr)\subuno=(W\otimes a)\oplus (a\otimes W)$. The multiplication is given by imposing that $1$ is the unity, and for homogeneous elements $x,y,z,t\in\Km_3$,
\begin{equation}\label{eq:mult_K10}
(x\otimes y)(z\otimes t)=(-1)^{yz}\Bigl(xz\otimes yt-\frac{3}{4}(x\mid z)(y\mid t)1\Bigr).
\end{equation}

If the characteristic is $3$, then $\Km_9\bydef \Km_3\otimes_{\FF} \Km_3$ is a simple ideal in $\Km_{10}$. Otherwise $\Km_{10}$ is simple.

\smallskip

It must be remarked that an `octonionic' construction of $\Km_{10}$ has been given in \cite{RZ_K10_oct}.

\smallskip

Using the construction above of $\Km_{10}$ in terms of $\Km_3$, the group of automorphisms $\Aut\bigl(\Km_{10}\bigr)$ was computed in \cite{ELS07}. This was used in \cite{CDM10} to classify gradings on $\Km_{10}$ over algebraically closed fields of characteristic zero.

\smallskip

Our goal here is to compute the group-scheme of automorphisms of $\Km_{10}$, and to use it to revisit, extend, and simplify drastically, the known results on twisted forms and on gradings on $\Km_{10}$. 

Here we follow the functorial approach (see, for instance, \cite{Wat}). An affine group-scheme over $\FF$ is a representable group-valued functor defined on the category of unital, commutative, associative algebras over $\FF$: $\AlgF$. Thus the group-scheme $\AAut\bigl(\Km_{10}\bigr)$ is the functor $\AlgF\rightarrow \Grp$ that takes any object $R$ in $\AlgF$ to the group $\Aut_R\bigl(\Km_{10}\otimes_\FF R)$ (the group of automorphisms of the $R$-superalgebra $\Km_{10}\otimes_\FF R$, i.e., the group of $R$-linear isomorphisms preserving the multiplication and the grading), with the natural definition on morphisms.

It turns out that $\AAut\bigl(\Km_{10}\bigr)$ is isomorphic to a semidirect product $\bigl(\SSL_2\times\SSL_2\bigr)\rtimes \Cm_2$ (Theorem \ref{th:AAut_K10}), where $\Cm_2$ is the constant group scheme attached to the cyclic group of two elements (also denoted by $\Cm_2$). This extends the result in \cite{ELS07}.

Moreover, the canonical projection $\pi:\AAut\bigl(\Km_{10}\bigr)\rightarrow \Cm_2$ induces an isomorphism of pointed sets $H^1\bigl(\FF,\AAut\bigl(\Km_{10}\bigr)\bigr)\rightarrow H^1(\FF,\Cm_2)$ (Theorem \ref{th:pi*_iso}). This associates to the isomorphism class of any twisted form of $\Km_{10}$, the isomorphism class of a quadratic \'etale algebra over $\FF$, which allows to get easily the classification of twisted forms of $\Km_{10}$ (Corollary \ref{co:pi*_iso}). Twisted forms of $\Km_{10}$ were classified, in a completely different way, in \cite{RZ} and \cite{EO}. We believe that our approach is more natural.

A simple observation shows that $\bigl(\SSL_2\times\SSL_2\bigr)\rtimes \Cm_2$ is also the automorphism group-scheme of $\Km_3\times \Km_3$, or of $\Jm(W)\times \Jm(W)$, where $\Jm(W)$ is the Jordan superalgebra of the `superform' $(\,\mid\,)$ on $W=W\subuno$ above. That is, $\Jm(W)\subo=\FF 1$, $\Jm(W)\subuno=W$ and the multiplication is given by
\[
1x=x1=x,\qquad vw=(v\mid w)1,
\]
for any $x\in\Jm(W)$, $v,w\in W$. This observation will be used in the proof of the bijection $H^1\bigl(\FF,\AAut\bigl(\Km_{10}\bigr)\bigr)\simeq H^1(\FF,\Cm_2)$.

Finally, given an abelian group $G$, a $G$-grading on a superalgebra $\cA$ corresponds to a homomorphism of group schemes
\[
G^D\longrightarrow \AAut(\cA),
\]
where $G^D$ is the Cartier dual to the constant group scheme given by $G$. (See \cite{EKmon} for the basic definitions and facts on gradings.)

The classification of $G$-gradings up to isomorphism on $\Km_3\times \Km_3$ (or $\Jm(W)\times\Jm(W)$) is an easy exercise (Proposition \ref{pr:gradingsK3K3}), and it follows at once from this the classification of $G$-gradings, up to isomorphism, on $\Km_{10}$ (Theorem \ref{th:gradings_K10}), thus extending and simplifying widely the results in \cite{CDM10}.

\bigskip


\section{The group-scheme of automorphisms}

Consider the two-dimensional vector space $W=(\Km_3)\subuno$, which is endowed with the nonzero skew-symmetric bilinear form $(\,\mid\,):W\times W\rightarrow \FF$. The special linear group-scheme $\SSL(W)$ coincides with the symplectic group scheme $\SSP(W)$, which is, up to isomorphism, the group-scheme of automorphisms of $\Km_3$ (or of $\Jm(W)$).

The constant group scheme $\Cm_2$ acts on $\SSL(W)\times \SSL(W)$ by swapping the arguments, and hence we get a natural semidirect product $\bigl(\SSL(W)\times\SSL(W)\bigr)\rtimes\Cm_2$. The group isomorphism $\Phi$ in \cite[p.~3809]{ELS07} extends naturally to a homomorphism of affine group-schemes:
\begin{equation}\label{eq:Phi}
\begin{split}
\Phi:\bigl(\SSL(W)\times\SSL(W)\bigr)\rtimes\Cm_2&\longrightarrow \AAut\bigl(\Km_{10}\bigr)\\
(f,g)\qquad&\mapsto\quad\Phi_{(f,g)}:\begin{cases}\  1\ \mapsto \ 1,\\
     a\otimes a\mapsto a\otimes a,\\
     v\otimes a\mapsto f(v)\otimes a,\\
     a\otimes v\mapsto a\otimes g(v),\\
     v\otimes w\mapsto f(v)\otimes g(w),\end{cases}\\
     \text{generator of $\Cm_2$}&\mapsto\quad \tau:\begin{cases}
            \ 1\ \mapsto\ 1,\\
             x\otimes y\mapsto (-1)^{xy}y\otimes x,\end{cases}
\end{split}
\end{equation}
for any $R$ in $\AlgF$, $f,g\in \SSL(W)(R)$ (i.e., $f,g\in \End_R(W\otimes_\FF R)\simeq M_2(R)$ of determinant $1$), $v,w\in W_R\bydef W\otimes_\FF R$, $x,y\in (\Km_3)_R$. (Note that a representation of a constant group scheme $\rho:\Gm\rightarrow \GGL(V)$ is determined by its behavior over $\FF$: $\rho_\FF:\Gm\rightarrow GL(V)$.)

\begin{theorem}\label{th:AAut_K10}
$\Phi$ is an isomorphism of affine group-schemes.
\end{theorem}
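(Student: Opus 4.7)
The plan is to verify that $\Phi$ is a well-defined monomorphism of affine group schemes and then that it is surjective. For well-definedness, since $\SSL(W)=\SSP(W)$ is the automorphism group scheme of $\Km_3$, any $f,g\in\SSL(W)(R)$ preserve both the multiplication $ww'=(w\mid w')a$ and the form $(\,\mid\,)$ on $(\Km_3)_R$, so $\Phi_{(f,g)}$ respects the product formula \eqref{eq:mult_K10}. That $\tau$ is an automorphism is a short sign check using the supersymmetry of $(\,\mid\,)$, and the semidirect-product relation $\tau\circ\Phi_{(f,g)}\circ\tau=\Phi_{(g,f)}$ is immediate from the defining formulas.

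For injectivity, suppose $\Phi_{(f,g,\epsilon)}=\id$ on some $R$-point. The subspaces $W\otimes a$ and $a\otimes W$ are distinct free direct summands of $(\Km_{10}\otimes_\FF R)\subuno$; each $\Phi_{(f,g)}$ preserves them while $\tau$ exchanges them, so $\epsilon$ must be trivial; then $f(v)\otimes a=v\otimes a$ and $a\otimes g(v)=a\otimes v$ give $f=g=\id$. Since a monomorphism of affine group schemes of finite type over a field is automatically a closed immersion, $\Phi$ identifies its source with a closed subgroup scheme of $\AAut(\Km_{10})$.

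Surjectivity is the main obstacle. The strategy is to combine (a) agreement on $\Falg$-points, which is the content of the group-theoretic computation of \cite{ELS07} applied over $\Falg$, with (b) equality of Lie algebras. For (b), the key step is showing $\dim\Der(\Km_{10})\subo=6$: every even derivation $d$ annihilates $E\bydef a\otimes a$, since $E^2=E-\tfrac{3}{16}$ gives $(2L_E-\id)d(E)=0$ and $2L_E-\id$ acts invertibly on $(\Km_{10})\subo$ (being $-\tfrac{1}{2}\id$ on $W\otimes W$ and nonsingular on $\FF 1\oplus\FF E$). The products $(v\otimes a)(v'\otimes a)=(v\mid v')(E-\tfrac{3}{8})$ and $(v\otimes a)(a\otimes w)=\tfrac{1}{4}(v\otimes w)$ read off from \eqref{eq:mult_K10} then force $d$ on the odd part to be governed by a pair $(f_1,g_2)\in\frsl(W)\oplus\frsl(W)$, which also determines $d$ on $W\otimes W$ and matches $d\Phi$. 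Combining (a) and (b), one has a closed immersion of smooth group schemes of the same dimension ($=6$) that agree on $\Falg$-points, which must be an isomorphism.
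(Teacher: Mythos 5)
Your proof is correct and rests on the same two pillars as the paper's: bijectivity of $\Phi_{\overline{\FF}}$ from \cite[Theorem~3.3]{ELS07}, and bijectivity of the differential, i.e.\ $\Der(\Km_{10})\subo\simeq\frsl(W)\times\frsl(W)$, which the paper quotes from \cite[Theorem~2.8]{BE02} and you instead verify by a direct (and correct) computation with $E=a\otimes a$. The only packaging difference is that you route the conclusion through ``monomorphism $\Rightarrow$ closed immersion'' plus a smoothness/dimension count for the target, whereas the paper directly invokes the criterion of \cite[Theorem~A.50]{EKmon} that a homomorphism from a smooth algebraic affine group scheme which is bijective on $\overline{\FF}$-points and on Lie algebras is an isomorphism.
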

\begin{proof}
If $\overline{\FF}$ denotes an algebraic closure of $\FF$, then $\Phi_{\overline{\FF}}$ is a group isomorphism \cite[Theorem 3.3]{ELS07}. Since $\bigl(\SSL(W)\times\SSL(W)\bigr)\rtimes\Cm_2\simeq \bigl(\SSL_2\times\SSL_2\bigr)\rtimes\Cm_2$ is smooth, it is enough to prove that the differential $\textup{d}\Phi$ is bijective (see, for instance \cite[Theorem A.50]{EKmon}). The Lie algebra of $\bigl(\SSL(W)\times\SSL(W)\bigr)\rtimes\Cm_2$ is $\frsl(W)\times \frsl(W)$, while the Lie algebra of $\AAut\bigl(\Km_{10}\bigr)$ is the even part of its Lie superalgebra of derivations, which is again, up to isomorphism, $\frsl(W)\times\frsl(W)$ identified naturally with a subalgebra of $\End_\FF\bigl(\Km_{10}\bigr)$ (see \cite[Theorem 2.8]{BE02}). Moreover, with the natural identifications, $\textup{d}\Phi$ is the identity map.
\end{proof}

The last result in this section is the simple observation that $\bigl(\SSL(W)\times\SSL(W)\bigr)\rtimes\Cm_2$ is also the group-scheme of automorphisms of the Jordan superalgebra $\Km_3\times\Km_3$ and $\Jm(W)\times \Jm(W)$. This will be instrumental in the next section. Its proof is straightforward.

\begin{proposition}\label{pr:Psis}
The natural transformations defined by:
\[
\begin{split}
\Psi^1:\bigl(\SSL(W)\times\SSL(W)\bigr)\rtimes\Cm_2&\longrightarrow \AAut\bigl(\Km_3\times\Km_3\bigr)\\
(f,g)\qquad&\mapsto\quad\Psi^1_{(f,g)}:\begin{cases}(a,0)\mapsto (a,0),\\
        (0,a)\mapsto (0,a),\\
        (v,w)\mapsto \bigl(f(v),g(w)\bigr),
     \end{cases}\\
     \text{generator of $\Cm_2$}&\mapsto\quad \tau: (x,y)\mapsto (y,x),
\end{split}
\]
for $v,w\in W$ and $x,y\in \Km_3$,
and
\[
\begin{split}
\Psi^2:\bigl(\SSL(W)\times\SSL(W)\bigr)\rtimes\Cm_2&\longrightarrow \AAut\bigl(\Jm(W)\times\Jm(W)\bigr)\\
(f,g)\qquad&\mapsto\quad\Psi^2_{(f,g)}:\begin{cases}(1,0)\mapsto (1,0),\\
        (0,1)\mapsto (0,1),\\
        (v,w)\mapsto \bigl(f(v),g(w)\bigr),
     \end{cases}\\
     \text{generator of $\Cm_2$}&\mapsto\quad \tau: (x,y)\mapsto (y,x),
\end{split}
\]
for $v,w\in W$ and $x,y\in \Jm(W)$, are isomorphisms of group-schemes.
\end{proposition}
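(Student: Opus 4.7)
The plan is to follow the same strategy as in the proof of Theorem \ref{th:AAut_K10}. Since the source $\bigl(\SSL(W)\times\SSL(W)\bigr)\rtimes\Cm_2\simeq \bigl(\SSL_2\times\SSL_2\bigr)\rtimes\Cm_2$ is smooth, it is enough, for each $i=1,2$, to check that $\Psi^i$ is a well-defined homomorphism of affine group-schemes, that $\Psi^i_{\overline{\FF}}$ is a group isomorphism, and that the differential $\textup{d}\Psi^i$ is bijective.

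The first point is a direct verification from the multiplication formulas: for $\Psi^1$, the identities $a^2=a$, $av=\frac{1}{2}v$, $vw=(v\mid w)a$ defining $\Km_3$ are preserved componentwise because $f,g\in \SSL(W)(R)=\SSP(W)(R)$, and the swap $\tau$ is visibly an automorphism of $\Km_3\times\Km_3$; the analogous check with $\Jm(W)$ settles $\Psi^2$.

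For the bijectivity of $\Psi^i_{\overline{\FF}}$, the crucial input is that $\Km_3$ and $\Jm(W)$ are simple Jordan superalgebras over $\overline{\FF}$ with automorphism group $\SL(W)$, as recalled immediately before the proposition. Consequently $\Km_3\times\Km_3$ (respectively $\Jm(W)\times\Jm(W)$) has exactly two minimal ideals, namely the two factors, and any automorphism either preserves them or swaps them. The stabilizer of the factors contributes $\SL(W)\times\SL(W)$ and $\tau$ represents the nontrivial coset, yielding precisely the wreath product structure encoded by $\Psi^i$.

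Finally, the Lie algebra of the source is $\frsl(W)\times\frsl(W)$, while that of $\AAut\bigl(\Km_3\times\Km_3\bigr)$ (respectively $\AAut\bigl(\Jm(W)\times\Jm(W)\bigr)$) is the even part of its derivation superalgebra, namely $\Der(\Km_3)\times\Der(\Km_3)\simeq \frsl(W)\times\frsl(W)$ (and likewise for $\Jm(W)$), so $\textup{d}\Psi^i$ is the identity map under the natural identifications. The only step demanding real thought is the identification of the minimal ideals on $\overline{\FF}$-points; once that is in place, everything else reduces to unwinding definitions, matching the authors' remark that the proof is straightforward.
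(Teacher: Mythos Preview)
Your proof is correct and follows exactly the template the paper uses for Theorem~\ref{th:AAut_K10}; since the paper gives no proof beyond declaring it ``straightforward'', your approach is the natural one to supply. The one place worth a word of justification is the identification $\Der_{\bar 0}(\Km_3\times\Km_3)=\Der_{\bar 0}(\Km_3)\times\Der_{\bar 0}(\Km_3)$: this is not automatic because $\Km_3$ is not unital, but a one-line check using the idempotents $(a,0)$ and $(0,a)$ (any even derivation kills them and then preserves each Peirce component) settles it, and the analogous check for $\Jm(W)$ is even easier since $\Jm(W)$ is unital.
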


\bigskip


\section{Twisted forms}

The set of the isomorphism classes of twisted forms of $\Km_{10}$ is identified with the pointed set $H^1\bigl(\FF,\AAut\bigl(\Km_{10}\bigr)\bigr)$. (This is $H^1\bigl(\overline{\FF}/\FF,\AAut(\Km_{10}\bigr)\bigr)$ in the notation of \cite{Wat}.)

The key to the classification of twisted forms of $\Km_{10}$ is the following result:

\begin{theorem}\label{th:pi*_iso}
The canonical projection $\pi:\bigl(\SSL(W)\times\SSL(W)\bigr)\rtimes\Cm_2\rightarrow \Cm_2$ induces a bijection of pointed spaces $\pi_*:H^1\bigl(\FF,\AAut(\Km_{10}\bigr)\bigr)\rightarrow H^1(\FF,\Cm_2)$.
\end{theorem}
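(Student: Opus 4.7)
The approach is to apply the standard non-abelian Galois cohomology toolbox to the split short exact sequence of affine group-schemes
\[
1 \longrightarrow \SSL(W) \times \SSL(W) \longrightarrow \AAut(\Km_{10}) \stackrel{\pi}{\longrightarrow} \Cm_2 \longrightarrow 1
\]
furnished by Theorem \ref{th:AAut_K10}. Because the inclusion $\Cm_2 \hookrightarrow (\SSL(W) \times \SSL(W)) \rtimes \Cm_2$ is a section of $\pi$, the induced map $\pi_*$ admits a set-theoretic section and is therefore surjective. The entire content is then the triviality of each fibre of $\pi_*$.

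For the fibre analysis I would invoke the standard twisting formalism: a cocycle $\xi$ representing $[\xi]\in H^1(\FF,\Cm_2)$, lifted to the semidirect product through the splitting, yields a fibre
\[
\pi_*^{-1}([\xi]) = \operatorname{image of}\ H^1\bigl(\FF,\,{}_\xi(\SSL(W)\times\SSL(W))\bigr),
\]
where ${}_\xi(\cdot)$ denotes the inner twist. The essential step is to identify this twisted group-scheme concretely, and this is where Proposition \ref{pr:Psis} enters: via $\Psi^2$, the $\Cm_2$-action on $\SSL(W)\times\SSL(W)$ is transparently the swap of the two factors inside $\AAut(\Jm(W)\times\Jm(W))$. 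Writing $K$ for the quadratic étale $\FF$-algebra classified by $[\xi]$, the corresponding twisted form of $\Jm(W)\times\Jm(W)$ is the Weil restriction $R_{K/\FF}(\Jm(W)_K)$, whose group-scheme of automorphisms is $R_{K/\FF}(\SSL(W)_K)$. Consequently ${}_\xi(\SSL(W)\times\SSL(W))\cong R_{K/\FF}(\SSL(W)_K)$.

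To conclude, I would verify that both possibilities for the twisted $H^1$ are trivial. If $K=\FF\times\FF$ the twist is trivial and $H^1(\FF,\SSL(W)\times\SSL(W))=H^1(\FF,\SSL_2)^2=\{*\}$ by Hilbert 90. If $K$ is a quadratic étale field extension, Shapiro's lemma combined with Hilbert 90 over $K$ yields
\[
H^1\bigl(\FF,\,R_{K/\FF}(\SSL(W)_K)\bigr)\cong H^1(K,\SSL(W)_K)=\{*\}.
\]
In either case the fibre is a singleton, so $\pi_*$ is bijective. The main conceptual obstacle is the identification of the inner twist of $\SSL(W)\times\SSL(W)$ as a Weil restriction: Proposition \ref{pr:Psis} is precisely what makes this transparent, because transporting the computation from $\Km_{10}$ to $\Jm(W)\times\Jm(W)$ converts the \emph{a priori} opaque $\Cm_2$-action into the evident swap of two copies, whose twist by a quadratic étale algebra is manifestly a Weil restriction.
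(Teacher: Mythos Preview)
Your argument is correct, but the route differs from the paper's. Both proofs share the surjectivity step via the section and the exact-sequence observation that the fibre over the trivial class is a singleton (Hilbert~90 for $\SSL_2$). For the nontrivial fibres, however, the paper does \emph{not} invoke the twisting formalism, Weil restriction, or Shapiro's lemma. Instead it transports the question through $\Psi^2$ to $\cJ=\Jm(W)\times\Jm(W)$, reads $\pi$ as the restriction-to-even-part map $\AAut(\cJ)\to\AAut(\cJ\subo)$, and then argues by bare hands: a twisted form $\cH$ with $\cH\subo$ a quadratic field $K$ has $\cH\subuno$ a two-dimensional $K$-vector space carrying an anticommutative $K$-bilinear form into $K$, which is unique up to isomorphism, forcing $\cH\cong\Jm(W)\otimes_\FF K$. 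Your approach is the more structural one and generalizes immediately (any semidirect product $(\GG\times\GG)\rtimes\Cm_2$ with $H^1$ of $\GG$ trivial over every field extension would work); the paper's approach is more elementary and self-contained, trading Shapiro's lemma for a one-line classification of alternating forms on a plane.

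Two minor remarks on your write-up. First, the fibre over $[\xi]$ is, strictly speaking, the image of $H^1\bigl(\FF,{}_\xi(\SSL(W)\times\SSL(W))\bigr)$ in $H^1(\FF,{}_\xi\AAut(\Km_{10}))$, or equivalently a quotient by a group action; since you show the source is already a singleton this distinction is harmless, but it would be cleaner to say so. Second, you overstate the role of Proposition~\ref{pr:Psis} in your own argument: the swap action of $\Cm_2$ on $\SSL(W)\times\SSL(W)$ is already explicit in the very definition of the semidirect product (Theorem~\ref{th:AAut_K10}), so the identification of the twist with $R_{K/\FF}(\SSL(W)_K)$ follows directly without passing through $\Jm(W)\times\Jm(W)$. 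In the paper's proof, by contrast, $\Psi^2$ is genuinely essential, because it is what allows the fibre question to be rephrased as an elementary classification of superalgebra forms.
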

\begin{proof}
 First, the natural section $\iota:\Cm_2\rightarrow \bigl(\SSL(W)\times\SSL(W)\bigr)\rtimes\Cm_2$ satisfies $\pi\circ\iota=\id$, so $\pi_*\circ\iota_*=\id$, and $\pi_*$ is surjective.
 
Second, the short exact sequence
\[
1\longrightarrow \SSL(W)\times\SSL(W)\longrightarrow\bigl(\SSL(W)\times\SSL(W)\bigr)\rtimes\Cm_2
\longrightarrow \Cm_2\longrightarrow 1
\]
induces an exact sequence (see \cite[\S 18.1]{Wat}) in cohomology:
\begin{multline*}
1\rightarrow \SL(W)\times\SL(W)\rightarrow \bigl(\SL(W)\times\SL(W)\bigr)\rtimes\Cm_2\rightarrow \Cm_2\\  \rightarrow H^1(\FF,\SSL(W)\times\SSL(W))\rightarrow 
    H^1\bigl(\FF,\bigl(\SSL(W)\times\SSL(W)\bigr)\rtimes\Cm_2\bigr)\rightarrow 
    H^1(\FF,\Cm_2).
\end{multline*}
Since $H^1(\FF,\SSL_2)$ is trivial, this gives an exact sequence
\[
1\longrightarrow H^1\bigl(\FF,\bigl(\SSL(W)\times\SSL(W)\bigr)\rtimes\Cm_2\bigr)\rightarrow 
    H^1(\FF,\Cm_2),
\]
but, in principle, this does not prove that $\pi_*$ is a bijection. It just says that the only element in $H^1\bigl(\FF,\bigl(\SSL(W)\times\SSL(W)\bigr)\rtimes\Cm_2\bigr)$ that is sent by $\pi_*$ to the trivial element in $H^1(\FF,\Cm_2)$ is the trivial element.

Through the isomorphisms in the previous section, it is enough to prove that $\pi$ induces a bijection of pointed sets $\pi_*:H^1\bigl(\FF,\AAut\bigl(\Jm(W)\times\Jm(W)\bigr)\bigr)\rightarrow H^1(\FF,\Cm_2)$.

Write $\cJ=\Jm(W)\times \Jm(W)$ and identify $\bigl(\SSL(W)\times\SSL(W)\bigr)\rtimes\Cm_2$ with $\AAut(\cJ)$ by means of $\Psi^2$ in Proposition \ref{pr:Psis}. Then $\cJ\subo=\FF\times\FF$, and $\pi$ corresponds to the restriction map $\AAut(\cJ)\rightarrow \AAut(\cJ\subo)$, and the induced map in cohomology $\pi_*:H^1(\FF,\AAut(\cJ))\rightarrow H^1(\FF,\AAut(\cJ\subo))$ corresponds to the map that takes the isomorphism class of a twisted form $\cH$ of $\cJ$ to the isomorphism class of $\cH\subo$. If $\cH\subo$ is trivial ($\cH\subo\simeq \FF\times\FF$) then, by the above, $\cH$ is isomorphic to $\cJ$. Otherwise, $\cH\subo$ is a quadratic separable field extension of $\FF$, so that $\cH\subuno$ is a two-dimensional vector space over $\cH\subo$, and the multiplication $\cH\subuno\times\cH\subuno\rightarrow \cH\subo$ is $\cH\subo$-bilinear and anticommutative, and hence uniquely determined. It turns out that $\cH$ is isomorphic to $\Jm(W)\otimes_\FF\cH\subo$. This proves the injectivity of $\pi_*$.
\end{proof}

\begin{corollary}\label{co:pi*_iso}
Any twisted form of $\Km_{10}$ splits after a quadratic separable field extension, and for any such field extension $\KK$ of $\FF$, there is a unique (up to isomorphism) nontrivial twisted form of $\Km_{10}$ which splits over $\KK$.
\end{corollary}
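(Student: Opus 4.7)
The plan is to deduce everything directly from Theorem \ref{th:pi*_iso} together with the well-known classification of $\Cm_2$-torsors. Recall that $H^1(\FF,\Cm_2)$ is in canonical bijection with the set of isomorphism classes of quadratic \'etale $\FF$-algebras: the trivial class corresponds to the split algebra $\FF\times\FF$, while any non-trivial class corresponds to a quadratic separable field extension $\KK$ of $\FF$, and two such extensions give the same class if and only if they are $\FF$-isomorphic. This bijection is moreover natural with respect to scalar extensions: for any field extension $\FF'/\FF$, the restriction map $H^1(\FF,\Cm_2)\to H^1(\FF',\Cm_2)$ corresponds to $\KK\mapsto\KK\otimes_\FF\FF'$.

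Combining this with Theorem \ref{th:pi*_iso}, which gives a bijection $\pi_*:H^1\bigl(\FF,\AAut(\Km_{10})\bigr)\rightarrow H^1(\FF,\Cm_2)$, we obtain a bijective correspondence between isomorphism classes of twisted forms of $\Km_{10}$ and isomorphism classes of quadratic \'etale $\FF$-algebras. Because $\pi_*$ is natural in $\FF$, a twisted form $\cH$ of $\Km_{10}$ becomes split over a field extension $\FF'/\FF$ (that is, $\cH\otimes_\FF\FF'\simeq\Km_{10}\otimes_\FF\FF'$) if and only if its associated quadratic \'etale algebra $\KK_\cH$ satisfies $\KK_\cH\otimes_\FF\FF'\simeq\FF'\times\FF'$.

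From here both claims are immediate. For the first claim: any quadratic \'etale $\FF$-algebra $\KK_\cH$ splits over some quadratic separable extension of $\FF$ (take $\KK_\cH$ itself if it is a field, and any such extension if $\KK_\cH\simeq\FF\times\FF$), so any twisted form $\cH$ splits over a quadratic separable extension. For the second claim, given a quadratic separable field extension $\KK/\FF$, a quadratic \'etale $\FF$-algebra $\KK'$ satisfies $\KK'\otimes_\FF\KK\simeq\KK\times\KK$ if and only if $\KK'\simeq\FF\times\FF$ or $\KK'\simeq\KK$ (two distinct quadratic separable field extensions never become isomorphic to $\KK\times\KK$ upon tensoring with one of them). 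Hence, among twisted forms of $\Km_{10}$ split by $\KK$, exactly one non-trivial isomorphism class exists, namely the one corresponding to $\KK$ itself under $\pi_*^{-1}$.

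No step presents a real obstacle; the content of the corollary is entirely encapsulated in Theorem \ref{th:pi*_iso} plus the standard description of $H^1(\FF,\Cm_2)$. The only point that deserves a brief verification is the naturality of $\pi_*$ under base change, which is automatic since $\pi$ is a morphism of affine group-schemes over $\FF$ and all constructions commute with scalar extension.
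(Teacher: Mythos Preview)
Your proof is correct and follows exactly the approach the paper intends: the corollary is stated without proof because it is an immediate consequence of Theorem~\ref{th:pi*_iso} together with the standard identification of $H^1(\FF,\Cm_2)$ with isomorphism classes of quadratic \'etale $\FF$-algebras, and you have simply spelled out these details. The paper's subsequent paragraph, describing the twisted form via $H^1(\KK/\FF,\Cm_2)$, confirms that this is precisely the intended reasoning.
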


If $\KK$ is a quadratic separable field extension of $\FF$, the unique nontrivial twisted form of $\Km_{10}$ that splits over $\KK$ corresponds to the nontrivial element in $H^1(\KK/\FF,\Cm_2)$, and hence if the Galois group $\textrm{Gal}(\KK/\FF)$ is generated by $\sigma$ ($\sigma^2=\id$), this twisted form is given by
\begin{equation}\label{eq:twist_K}
\begin{split}
\Km_{10,\KK}&=\{X\in \Km_{10}\otimes_\FF\KK: (\tau\otimes\id)(X)=(\id\otimes\sigma)(X)\}\\
&=\{X\in\Km_{10}\otimes_\FF\KK: (\tau\otimes\sigma)(X)=X\},
\end{split}
\end{equation}
where $\tau$ is given in \eqref{eq:Phi}.

Hence, with $\KK=\FF(\alpha)$, $\alpha^2\in\FF$, $\sigma(\alpha)=-\alpha$, so if $\{u,v\}$ is a symplectic basis of $W=(\Km_3)\subuno$,
\begin{equation}\label{eq:K10K}
\begin{split}
\bigl(\Km_{10,\KK}\bigr)\subo&=\FF\textup{-span}\Bigl\langle 1\otimes 1, (a\otimes a)\otimes 1, (u\otimes u)\otimes\alpha,(v\otimes v)\otimes\alpha,\\[-3pt]
  &\hspace*{1.2in} (u\otimes v-v\otimes u)\otimes 1,(u\otimes v+v\otimes u)\otimes \alpha\Bigr\rangle,\\[3pt]
\bigl(\Km_{10,\KK}\bigr)\subuno&=\FF\textup{-span}\Bigl\langle (a\otimes x+x\otimes a)\otimes 1, (a\otimes x+x\otimes a)\otimes\alpha: x\in\{u,v\}\Bigr\rangle.
\end{split}
\end{equation}

\smallskip

\begin{remark} \null\quad
\begin{itemize}
\item Both $\Km_3$ and $\Jm(W)$ are rigid: any twisted form of any of these superalgebras is isomorphic to it. This is because $\AAut(\Km_3)\simeq\AAut(\Jm(W))\simeq \SSL(W)\simeq \SSL_2$.

\item The proof of Theorem \ref{th:pi*_iso} shows that the twisted forms of $\Km_3\times\Km_3$ (respectively $\Jm(W)\times\Jm(W)$) are, up to isomorphism, the superalgebras $\Km_3\otimes_\FF \KK$ (resp. $\Jm(W)\otimes_\FF\KK$), where $\KK$ is quadratic \'etale algebra over $\FF$, and any two such forms are isomorphic if and only if so are the corresponding \'etale algebras.
\end{itemize}
\end{remark}

\bigskip


\section{Gradings}

Given an abelian group $G$, a $G$-grading on a superalgebra $\cA=\cA\subo\oplus\cA\subuno$ is a decomposition into a direct sum of subspaces: $\cA=\bigoplus_{g\in G}\cA_g$, such that $\cA_g\cA_h\subseteq \cA_{gh}$ for any $g,h\in G$, and each homogeneous component is a subspace in the `super' sense of $\cA$: $\cA_g=(\cA_g\cap\cA\subo)\oplus(\cA_g\cap\cA\subuno)$. We will write $\deg(x)=g$ in case $0\ne x\in\cA_g$. Two $G$-gradings $\Gamma:\cA=\bigoplus_{g\in G}\cA_g$ and $\Gamma'=\bigoplus_{g\in G}\cA'_g$, are isomorphic if there is an automorphism $\varphi\in\Aut(\cA)$ such that $\varphi(\cA_g)=\cA'_g$ for any $g\in G$. 

A grading by $G$ is equivalent to a homomorphism of affine group schemes $G^D\rightarrow \AAut(\cA)$ (see \cite{EKmon}), and this shows that two superalgebras with isomorphic group-schemes of automorphisms have equivalent classifications of $G$-gradings up to isomorphism. Therefore, in order to classify gradings on $\Km_{10}$ it is enough to classify gradings on $\Km_3\times\Km_3$ (or $\Jm(W)\times\Jm(W)$), and this is straightforward.

Actually, fix a symplectic basis $\{u,v\}$ of $W=(\Km_3)\subuno$. 

\begin{df}\label{df:Gamma12}
Given an abelian group $G$, consider the following gradings ($e$ denotes the neutral element of $G$):
\begin{itemize}
\item For $g_1,g_2\in G$, denote by $\Gamma^1(G;g_1,g_2)$ the $G$-grading given by:
\[
\begin{split}
&\deg(x)=e\ \text{for any $x\in\bigl(\Km_3\times\Km_3)\subo$},\\
& \deg(u,0)=g_1=\deg(v,0)^{-1},\quad
\deg(0,u)=g_2=\deg(0,v)^{-1}.
\end{split}
\]

\item For $g,h\in G$ with $h^2=e\neq h$, denote by $\Gamma^2(G;g,h)$ the $G$-grading given by:
\[
\begin{split}
&\deg(a,a)=e,\quad\deg(a,-a)=h,\\ 
&\deg(u,u)=g=\deg(v,v)^{-1},\quad \deg(u,-u)=gh=\deg(v,-v)^{-1}.
\end{split}
\]
\end{itemize}
\end{df}

\begin{proposition}\label{pr:gradingsK3K3}
Any grading on $\Km_3\times\Km_3$ by the abelian group $G$ is isomorphic to either $\Gamma^1(G;g_1,g_2)$ or to $\Gamma^2(G;g,h)$ (for some $g_1,g_2$ or $g,h$ in $G$). 

Moreover, no grading of the first type ($\Gamma^1(G;g_1,g_2)$) is isomorphic to a grading of the second type ($\Gamma^2(G;g,h)$), and
\begin{itemize}
\item $\Gamma^1(G,g_1,g_2)$ is isomorphic to $\Gamma^1(G;g_1',g_2')$ if and only if the sets\\ $\{g_1,g_1^{-1},g_2,g_2^{-1}\}$ and $\{g_1',(g_1')^{-1},g_2',(g_2')^{-1}\}$ coincide.

\item $\Gamma^2(G;g,h)$ is isomorphic to $\Gamma^2(G;g',h')$ if and only if $h'=h$ and $g'\in\{g,gh,g^{-1},g^{-1}h\}$.
\end{itemize}
\end{proposition}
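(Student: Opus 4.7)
The plan is to exploit Proposition~\ref{pr:Psis}, which identifies $\AAut(\Km_3\times\Km_3)$ with $\bigl(\SSL(W)\times\SSL(W)\bigr)\rtimes\Cm_2$, so that a $G$-grading on $\cJ\bydef\Km_3\times\Km_3$ is the same datum as a morphism of group schemes $\eta:G^D\to\bigl(\SSL(W)\times\SSL(W)\bigr)\rtimes\Cm_2$. Composing with the projection $\pi$ to $\Cm_2$ produces a character $G^D\to\Cm_2$, equivalently an element $h\in G$ with $h^2=e$. The cases $h=e$ and $h\ne e$ will correspond to the types $\Gamma^1$ and $\Gamma^2$ respectively, and since the induced grading on $\cJ\subo\simeq\FF\times\FF$ detects $h$ (as the degree of $(a,-a)$ once $(a,a)$ is normalized to degree $e$), no type-$\Gamma^1$ grading can be isomorphic to any type-$\Gamma^2$ one.

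When $h=e$, the morphism $\eta$ factors through $\SSL(W)\times\SSL(W)$, so the two $\Km_3$-factors of $\cJ$ are $G$-graded subsuperalgebras and the grading amounts to a pair of independent $G$-gradings on $\Km_3$. Since $\AAut(\Km_3)\simeq\SSL(W)\simeq\SSL_2$, any $G$-grading on $\Km_3$ is determined, after a choice of symplectic basis $u,v$ of $W$, by an element $g\in G$ with $\deg u=g=(\deg v)^{-1}$ (the case $g=e$ giving the trivial grading). This produces exactly $\Gamma^1(G;g_1,g_2)$. When $h\ne e$, I pass to the equivalent model $\cJ=\Jm(W)\times\Jm(W)$ and observe that $\cJ\subuno$ is a free rank-$2$ module over the $G$-graded algebra $\cJ\subo\simeq\FF\times\FF$ carrying a nondegenerate $\cJ\subo$-valued symplectic form; the key step is to produce a homogeneous $\cJ\subo$-basis. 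Taking any nonzero homogeneous $u^*\in\cJ\subuno$ of some degree $g\in G$, the submodule $\cJ\subo\cdot u^*=\FF u^*\oplus\FF(e_-\cdot u^*)$ is graded rank-$1$ with components in degrees $g$ and $gh$; by graded non-degeneracy of the form, one picks a homogeneous generator $v^*$ of a complementary rank-$1$ summand with $u^*v^*=e_+$, which forces $\deg v^*=g^{-1}$. The identification $u^*\leftrightarrow (u,u)$, $v^*\leftrightarrow (v,v)$ then exhibits the grading as $\Gamma^2(G;g,h)$.

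For the isomorphism criteria within each type, I exploit that every automorphism of $\cJ$ is the image under $\Psi^1$ of a pair $(f_1,f_2)\in\SL(W)\times\SL(W)$ optionally followed by the factor swap $\tau$. For $\Gamma^1$: swapping $u\leftrightarrow v$ in factor $i$ (via an appropriate element of $\SL(W)$) inverts $g_i$, and applying $\tau$ swaps $g_1$ with $g_2$; together these yield precisely the stated set equality criterion, after checking directly that mixed transformations of the form $\Psi^1((f,\id))$ with $f$ non-scalar fail to preserve homogeneity and therefore cannot enlarge the orbit. For $\Gamma^2$: the invariant $h$ is preserved because it is intrinsic to the grading on $\cJ\subo$; the simultaneous swap $u\leftrightarrow v$ in both factors (via $\Psi^1((f,f))$ with $f(u)=v$, $f(v)=-u$) realizes $g\leftrightarrow g^{-1}$; and the odd sign-flip on one factor (realized by $-\id_W\in\SL(W)$, which is an automorphism of $\Km_3$ since the Jordan product $vw=(v\mid w)a$ is invariant under $v\mapsto -v$) sends $(u,u)\mapsto-(u,-u)$, realizing $g\mapsto gh$. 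Combined, these produce exactly the orbit $\{g,g^{-1},gh,g^{-1}h\}$.

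The anticipated main obstacle is Case $h\ne e$: producing the homogeneous symplectic $\cJ\subo$-basis of $\cJ\subuno$, which requires bookkeeping in graded module theory over the $G$-graded split étale algebra $\FF\times\FF$ (one must verify that graded-projective, equivalently graded-free, rank-$2$ modules with a graded nondegenerate symplectic form admit a hyperbolic homogeneous basis). Once this is in hand, the rest of the proposition follows by direct verification against the explicit description of $\AAut(\cJ)$ provided by $\Psi^1$.
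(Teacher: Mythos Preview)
Your proposal is correct and would go through, but it takes a noticeably more structural route than the paper. The paper stays entirely inside $\cJ=\Km_3\times\Km_3$ and never invokes group-scheme language or graded module theory: it simply observes that a $G$-grading on $\cJ$ restricts to one on $\cJ\subo\simeq\FF\times\FF$, which is either trivial (your $h=e$) or has $\deg(a,-a)=h$ of order~$2$. In the first case, $(a,0)$ and $(0,a)$ are homogeneous idempotents, so $W\times 0=(a,0)\cJ\subuno$ and $0\times W$ are graded and one reads off $\Gamma^1$ immediately. In the second case the paper uses the single observation $\cJ\subo=(\cJ\subuno)^2$: since $(a,a)$ spans the degree-$e$ part of $\cJ\subo$, some product of homogeneous odd elements must hit it, and rescaling gives homogeneous $(u_1,u_2),(v_1,v_2)$ with $(u_1,u_2)(v_1,v_2)=(a,a)$; this forces $\{u_i,v_i\}$ to be a symplectic basis of $W$ for $i=1,2$ and exhibits $\Gamma^2$ at once. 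The isomorphism criteria are then dismissed as ``clear'' (they follow from looking at the support of $\cJ\subuno$, which is invariant under graded isomorphism).

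Your approach---factoring $\eta$ through $\pi$ to detect $h$, passing to the $\Jm(W)\times\Jm(W)$ model, and treating $\cJ\subuno$ as a graded rank-$2$ symplectic $\cJ\subo$-module to extract a hyperbolic homogeneous basis---buys a conceptually uniform picture and makes the role of $\AAut(\cJ)$ explicit, at the cost of more machinery. The step you flag as the main obstacle (that any nonzero homogeneous $u^*\in\cJ\subuno$ generates a free rank-$1$ $\cJ\subo$-submodule when $h\ne e$) is genuinely needed for your argument; it is easily supplied by noting that if $u^*=(u_1,0)$ were homogeneous of degree $g$, then $(a,-a)\cdot u^*=\tfrac12 u^*$ would also be homogeneous of degree $gh\ne g$, a contradiction. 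But the paper's ``$(\cJ\subuno)^2=\cJ\subo$'' trick sidesteps this entirely and is considerably shorter.
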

\begin{proof}
Any $G$-grading on $\cJ\bydef \Km_3\times\Km_3$ induces a $G$-grading on $\cJ\subo$, which is isomorphic to $\FF\times\FF$, and hence we are left with two cases:
\begin{enumerate}
\item The grading on $\cJ\subo$ is trivial, i.e., $\cJ\subo$ is contained in the homogeneous component $\cJ_e$. Then, with $W=(\Km_3)\subuno$, both $W\times 0=(a,0)\cJ\subuno$ and $0\times W=(0,a)\cJ\subuno$ are graded subspaces of $\cJ\subuno$. Hence we can take bases $\{u_i,v_i\}$ of $W$, $i=1,2$, such that $\{(u_1,0),(v_1,0),(0,u_2),(0,v_2)\}$ is a basis of $\cJ\subuno$ consisting of homogeneous elements. We can adjust $v_i$, $i=1,2$, so that $(u_i\mid v_i)=1$. If $\deg(u_i,0)=g_i$, $i=1,2$, the grading is isomorphic to $\Gamma^1(G;g_1,g_2)$.

\item The grading on $\cJ\subo$ is not trivial. Then there is an element $h\in G$ of order $2$ such that $\deg(a,a)=e$ and $\deg(a,-a)=h$. (Note that $(a,a)$ is the unity element of $\cJ\subo\,(\simeq \FF\times\FF)$, so it is always homogeneous of degree $e$.) As $\cJ\subo=(\cJ\subuno)^2$, there are homogeneous elements $(u_1,u_2),(v_1,v_2)\in \cJ\subuno$ such that $(u_1,u_2)(v_1,v_2)=(a,a)$. If $g=\deg(u_1,u_2)$, this grading is isomorphic to $\Gamma^2(G;g,h)$.
\end{enumerate}
The conditions for isomorphism are clear.
\end{proof}

Any grading $\Gamma^1(G;g_1,g_2)$ is a coarsening of the grading $\Gamma^1\bigl(\ZZ^2; (1,0),(0,1)\bigr)$, while any grading $\Gamma^2(G;g,h)$ is a coarsening of the grading $\Gamma^2\bigl(\ZZ\times\ZZ_2; (1,\bar 0),(0,\bar 1)\bigr)$, where $\ZZ_2=\ZZ/2\ZZ$. As an immediate consequence, we obtain the next result.

\begin{corollary}\label{co:fine_gradings_K3K3}
Up to equivalence, there are exactly two fine gradings on $\Km_3\times\Km_3$: $\Gamma^1\bigl(\ZZ^2; (1,0),(0,1)\bigr)$ and $\Gamma^2\bigl(\ZZ\times\ZZ_2; (1,\bar 0),(0,\bar 1)\bigr)$.
\end{corollary}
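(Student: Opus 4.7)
The strategy is to combine Proposition \ref{pr:gradingsK3K3} with the explicit universal property of the two listed gradings, and then to check fineness and inequivalence separately.

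First I would formalize the coarsening remark made just before the corollary. For any abelian group $G$ and any $g_1,g_2\in G$, the group homomorphism $\ZZ^2\to G$ sending $(1,0)\mapsto g_1$, $(0,1)\mapsto g_2$ exhibits $\Gamma^1(G;g_1,g_2)$ as a coarsening of $\Gamma^1(\ZZ^2;(1,0),(0,1))$. Similarly, since the definition of $\Gamma^2(G;g,h)$ requires $h^2=e$, the map $\ZZ\times\ZZ_2\to G$, $(1,\bar 0)\mapsto g$, $(0,\bar 1)\mapsto h$, is a well-defined homomorphism and exhibits $\Gamma^2(G;g,h)$ as a coarsening of $\Gamma^2(\ZZ\times\ZZ_2;(1,\bar 0),(0,\bar 1))$. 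Combined with Proposition \ref{pr:gradingsK3K3}, this already shows that every fine grading is equivalent to one of the two listed universal gradings.

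Next I would verify that both universal gradings are actually fine. In $\Gamma^2(\ZZ\times\ZZ_2;(1,\bar 0),(0,\bar 1))$ all six homogeneous components are one-dimensional, so the grading admits no strict refinement. For $\Gamma^1(\ZZ^2;(1,0),(0,1))$ the four odd components are one-dimensional (and stay so in any refinement), and the only component that could potentially split is the $2$-dimensional even identity component $\FF(a,0)\oplus\FF(0,a)$. However, $(a,0)=(u,0)(v,0)$ and $(0,a)=(0,u)(0,v)$ are products of already-homogeneous odd elements, so they remain homogeneous in any refinement; being idempotents, each must lie in the neutral component of the refined grading. Hence the $2$-dimensional even identity component cannot split, and $\Gamma^1(\ZZ^2;(1,0),(0,1))$ is fine.

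Finally, the two universal gradings are inequivalent because their component dimension profiles differ: $\Gamma^1(\ZZ^2;(1,0),(0,1))$ has shape $(2,1,1,1,1)$, whereas $\Gamma^2(\ZZ\times\ZZ_2;(1,\bar 0),(0,\bar 1))$ has shape $(1,1,1,1,1,1)$, and any equivalence of gradings preserves such a profile. The main obstacle is the fineness of $\Gamma^1$, which requires the specific observation that $(a,0)$ and $(0,a)$ are forced to be homogeneous in any refinement (as products of homogeneous odd elements) and then, as idempotents, forced into the neutral component.
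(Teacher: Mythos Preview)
Your argument is correct and follows the same line as the paper: the coarsening observation (every $\Gamma^1(G;g_1,g_2)$ coarsens the $\ZZ^2$-grading, every $\Gamma^2(G;g,h)$ coarsens the $\ZZ\times\ZZ_2$-grading) together with Proposition~\ref{pr:gradingsK3K3} is exactly what the paper invokes. The paper simply records this observation and declares the corollary an ``immediate consequence'', whereas you go on to verify explicitly that both universal gradings are fine and that they are inequivalent; these extra checks are sound and indeed needed for a complete argument, even if the paper treats them as evident.
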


Proposition \ref{pr:gradingsK3K3} and Corollary \ref{co:fine_gradings_K3K3} have completely similar counterparts for $\Jm(W)\times\Jm(W)$.

\smallskip

In order to transfer these results to Kac's superalgebra $\Km_{10}$, take into account that $\Km_{10}$ is generated by its odd part, as $\bigl((\Km_{10})\subuno\bigr)^2=(\Km_{10})\subo$, so any grading is determined by its restriction to the odd part, and use the commutativity of the diagram
\[
\xymatrix{  
\AAut\bigl(\Km_3\times\Km_3\bigr) \ar@{^{(}->}[d]
&\bigl(\SSL(W)\times\SSL(W)\bigr)\rtimes \Cm_2\ar[l]_{\Psi^1\qquad} \ar[r]^{\null\quad\qquad\Phi}  
&\AAut\bigl(\Km_{10}\bigr)  \ar@{_{(}->}[d]\\
\GGL(W\times W)\ar[rr]^{\simeq\qquad}
&&\GGL\bigl((W\otimes a)\oplus (a\otimes W)\bigr) 
}
\]
where the vertical arrows are given by the restrictions to the odd parts, and the bottom isomorphism is given by the natural identification $W\times W\rightarrow (W\otimes a)\oplus (a\otimes W)$, $(v,w)\mapsto v\otimes a + a\otimes w$.

Thus Definition \ref{df:Gamma12} transfers to $\Km_{10}$ as follows:

\begin{df}\label{df:K10}
Given an abelian group $G$, consider the following $G$-gradings on $\Km_{10}$:
\begin{itemize}
\item For $g_1,g_2\in G$, denote by $\Gamma^1_{\Km_{10}}(G;g_1,g_2)$ the $G$-grading determined by:
\[
 \deg(u\otimes a)=g_1=\deg(v\otimes a)^{-1},\quad
\deg(a\otimes u)=g_2=\deg(a\otimes v)^{-1}.
\]

\item For $g,h\in G$ with $h^2=e\neq h$, denote by $\Gamma^2_{\Km_{10}}(G;g,h)$ the $G$-grading determined by:
\[
\begin{split}
&\deg(u\otimes a+a\otimes u)=g=\deg(v\otimes a+a\otimes v)^{-1},\\ 
&\deg(u\otimes a-a\otimes u)=gh=\deg(v\otimes a-a\otimes v)^{-1}.
\end{split}
\]
\end{itemize}
\end{df}

\smallskip

And Proposition \ref{pr:gradingsK3K3} and Corollary \ref{co:fine_gradings_K3K3} are now transferred easily to $\Km_{10}$:

\begin{theorem}\label{th:gradings_K10}
Any grading on $\Km_{10}$ by the abelian group $G$ is isomorphic to either $\Gamma^1_{\Km_{10}}(G;g_1,g_2)$ or to $\Gamma^2_{\Km_{10}}(G;g,h)$ (for some $g_1,g_2$ or $g,h$ in $G$). 

No grading of the first type  is isomorphic to a grading of the second type, and
\begin{itemize}
\item $\Gamma^1_{\Km_{10}}(G;g_1,g_2)$ is isomorphic to $\Gamma^1_{\Km_{10}}(G;g_1',g_2')$ if and only if the sets\\ $\{g_1,g_1^{-1},g_2,g_2^{-1}\}$ and $\{g_1',(g_1')^{-1},g_2',(g_2')^{-1}\}$ coincide.

\item $\Gamma^2_{\Km_{10}}(G;g,h)$ is isomorphic to $\Gamma^2_{\Km_{10}}(G;g',h')$ if and only if $h'=h$ and $g'\in\{g,gh,g^{-1},g^{-1}h\}$.
\end{itemize}

Moreover, there are exactly two fine gradings on $\Km_{10}$ up to equivalence, namely 
$\Gamma^1_{\Km_{10}}\bigl(\ZZ^2; (1,0),(0,1)\bigr)$ and $\Gamma^2_{\Km_{10}}\bigl(\ZZ\times\ZZ_2; (1,\bar 0),(0,\bar 1)\bigr)$.
\end{theorem}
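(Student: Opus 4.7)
The plan is to lift the classification from $\Km_3\times\Km_3$ to $\Km_{10}$ via the isomorphism of automorphism group-schemes. Since gradings by an abelian group $G$ correspond bijectively to homomorphisms $G^D\to\AAut(\cA)$, and two superalgebras with isomorphic automorphism group-schemes have equivalent classifications of $G$-gradings up to isomorphism, the isomorphism $\Phi\circ(\Psi^1)^{-1}:\AAut(\Km_3\times\Km_3)\xrightarrow{\sim}\AAut(\Km_{10})$ induces a bijection between the corresponding sets of isomorphism classes of $G$-gradings. This reduces the theorem to Proposition \ref{pr:gradingsK3K3}, modulo identifying explicitly how the two model families transport across the isomorphism.

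First I would use the commutative diagram reproduced just before Definition \ref{df:K10}. Since $\Km_{10}$ is generated by its odd part (because $((\Km_{10})\subuno)^2=(\Km_{10})\subo$), any grading on $\Km_{10}$ is determined by its restriction to $(\Km_{10})\subuno=(W\otimes a)\oplus(a\otimes W)$; likewise any grading on $\Km_3\times\Km_3$ is determined by its restriction to $(\Km_3\times\Km_3)\subuno=W\times W$. The bottom horizontal arrow of the diagram, $(v,w)\mapsto v\otimes a+a\otimes w$, is therefore the relevant transport of gradings. Under this identification, the basis $\{(u,0),(v,0),(0,u),(0,v)\}$ of $W\times W$ is sent to $\{u\otimes a,\,v\otimes a,\,a\otimes u,\,a\otimes v\}$, so $\Gamma^1(G;g_1,g_2)$ is transported to $\Gamma^1_{\Km_{10}}(G;g_1,g_2)$. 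Similarly, the basis $\{(u,u),(v,v),(u,-u),(v,-v)\}$ of $W\times W$ corresponds to $\{u\otimes a+a\otimes u,\,v\otimes a+a\otimes v,\,u\otimes a-a\otimes u,\,v\otimes a-a\otimes v\}$, so $\Gamma^2(G;g,h)$ is transported to $\Gamma^2_{\Km_{10}}(G;g,h)$.

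With these two identifications in hand, the classification of gradings, the non-isomorphism between the two types, and the conditions for isomorphism within each type, follow verbatim from Proposition \ref{pr:gradingsK3K3}. The fine gradings on $\Km_{10}$ likewise correspond to the fine gradings of $\Km_3\times\Km_3$ given in Corollary \ref{co:fine_gradings_K3K3}, so there are exactly two of them up to equivalence, namely $\Gamma^1_{\Km_{10}}\bigl(\ZZ^2;(1,0),(0,1)\bigr)$ and $\Gamma^2_{\Km_{10}}\bigl(\ZZ\times\ZZ_2;(1,\bar 0),(0,\bar 1)\bigr)$, every $G$-grading being obtained from one of them by a coarsening corresponding to a group homomorphism from $\ZZ^2$ or $\ZZ\times\ZZ_2$ into $G$.

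The only genuine content beyond citing the previous results is to check that the transport of gradings across the diagram is correct at the level of the designated homogeneous generators; the main (minor) obstacle is thus purely bookkeeping, making sure that the scalar-squaring involved in the isomorphism $W\times W\simeq (W\otimes a)\oplus(a\otimes W)$ does not alter the homogeneous structure. Since the identification is a linear bijection sending designated homogeneous generators to designated homogeneous generators with matching degrees, and since the group-scheme isomorphism intertwines the restriction maps to the odd parts, no essential difficulty remains.
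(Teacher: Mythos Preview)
Your proposal is correct and follows essentially the same approach as the paper: transport the classification from $\Km_3\times\Km_3$ to $\Km_{10}$ via the isomorphism $\Phi\circ(\Psi^1)^{-1}$ of automorphism group-schemes, using the commutative diagram and the fact that both superalgebras are generated by their odd parts, so that Proposition~\ref{pr:gradingsK3K3} and Corollary~\ref{co:fine_gradings_K3K3} transfer directly. Your write-up is in fact more explicit than the paper's, which simply states that the earlier results ``are now transferred easily to $\Km_{10}$''.
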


\smallskip

\begin{remark}
Any grading on $\Km_{10}$ by an abelian group $G$ extends naturally to a grading by either $\ZZ\times G$ or $\ZZ_2^2\times G$ on the exceptional simple Lie superalgebra $F(4)$, which is obtained from $\Km_{10}$ using the well-known Tits-Kantor-Koecher construction. However, not all gradings on $F(4)$ are obtained in this way.
\end{remark}

\bigskip


\begin{thebibliography}{CDM10}

\bibitem[BE02]{BE02}
G.~Benkart and A.~Elduque, 
\emph{A new construction of the Kac Jordan superalgebra}, 
Proc. Amer. Math. Soc. \textbf{130} (2002), no.~11, 3209--3217. 

\bibitem[CDM10]{CDM10}
A.J.~Calder\'on Mart{\'\i}n, C.~Draper Fontanals, and C.~Mart{\'\i}n Gonz\'alez, 
\emph{Gradings on the Kac superalgebra}, 
J.~Algebra \textbf{324} (2010), no.~12, 3249--3261.

\bibitem[EK13]{EKmon} 
A.~Elduque and M.~Kochetov, \emph{Gradings on simple {L}ie algebras}, Mathematical Surveys and Monographs \textbf{189},  American Mathematical Society, Providence, RI, 2013.

\bibitem[EO00]{EO}
A.~Elduque and S.~Okubo,
\emph{Pseudo-composition superalgebras}, 
J.~Algebra \textbf{227} (2000), no.~1, 1--25. 

\bibitem[ELS07]{ELS07}
A.~Elduque, J.~Laliena, and S.~Sacrist\'an, 
\emph{The Kac Jordan superalgebra: automorphisms and maximal subalgebras}
Proc. Amer. Math. Soc. \textbf{135} (2007), no.~12, 3805--3813. 

\bibitem[Kac77]{Kac}
V.G.~Kac, 
\emph{Classification of simple $\ZZ$-graded Lie superalgebras and simple Jordan superalgebras}, 
Comm. Algebra \textbf{5} (1977), no.~13, 1375--1400. 

\bibitem[RZ03]{RZ}
M.L.~Racine and E.I.~Zel′manov, \emph{Simple Jordan superalgebras with semisimple even part},
J.~Algebra \textbf{270} (2003), no.~2, 374--444. 

\bibitem[RZ15]{RZ_K10_oct}
M.L.~Racine and E.I.~Zel′manov, 
\emph{An octonionic construction of the Kac superalgebra K10},
Proc. Amer. Math. Soc. \textbf{143} (2015), no.~3, 1075--1083. 

\bibitem[Wat79]{Wat}
W.C.~Waterhouse, 
\emph{Introduction to affine group schemes},
Graduate Texts in Mathematics \textbf{66}. Springer-Verlag, New York-Berlin, 1979.

 
 \end{thebibliography}
\end{document}